\def\ra{\rightarrow}
\numberwithin{equation}{section}
\theoremstyle{plain}
\newtheorem{thm}{Theorem}[section]
\newtheorem{cor}[thm]{Corollary}
\theoremstyle{definition}
\theoremstyle{definition}
\newtheorem{defn}[thm]{Definition}
\newcommand{\comment}[1]{}
\let \cal \mathcal
\newcommand\vol{\mathrm{vol}}
\newcommand\area{\mathrm{area}}
\newcommand\sys{\mathrm{sys}}
\DeclareMathOperator{\N}{N}
\DeclareMathOperator{\arccosh}{arccosh}
\def\Z{\mathbb{Z}}
\def\Q{\mathbb{Q}}
\def\H{\mathbb{H}}
\DeclareMathOperator{\G}{\textbf{G}}
\DeclareMathOperator{\R}{\mathbb R}
\begin{document}

\title{Systole on locally symmetric spaces}

\makeatletter

\makeatother
\author{InKang Kim}

\address{Inkang Kim: School of Mathematics, KIAS, Heogiro 85, Dongdaemun-gu Seoul, 02455, Republic of Korea}
\email{inkang@kias.re.kr}

\address{}
\email{}

\begin{abstract}
 Here we survey on the growth of systoles of arithmetic locally symmetric spaces under the congruence covering and give simple proofs for the  best possible constants of Gromov for several important classes of symmetric spaces.
 \end{abstract}
\footnotetext[1]{2000 {\sl{Mathematics Subject Classification.}}
53C43, 53C21, 53C25}
  \footnotetext[2]{{\sl{Key words and phrases.}} Systole, arithmetic lattice, locally symmetric space.}
\footnotetext[3]{\sl{Research by the author is partially supported by Grant NRF-2017R1A2A2A05001002.}}
\maketitle
\tableofcontents

\section*{Introduction}
A systole $\sys_{1}(M)$ of a Riemannian manifold $M$ is the smallest length of closed geodesics in $M$. It captures certain aspect of  geometry of $M$. For example, Margulis conjectured that there exists a uniform lower bound of systoles for arithmetic locally symmetric manifolds of finite volume. This conjecture has been supported by several cases. For example, the systole grows under the covering map, which was demonstrated, not long ago by
Buser and Sarnak in \cite{BS}, where they constructed examples of congruence coverings of an arithmetic hyperbolic surface whose systole grows logarithmically with respect to the area:
$$\sys_{1}(S)\geq \frac{4}{3}\log (\area(S))-c,$$ where $c$ is a constant independent of $S$. In 1996, Gromov \cite[Sec. 3.C.6]{Gr} showed that for any regular congruence covering $M_I$ of a compact arithmetic locally symmetric space $M$, there exists a constant $C>0$ so that
$$ \sys_{1}(M_I)\geq C \log(\vol(M_I))- d,$$ where $d$ is independent of $M_I$. Nonetheless,  an explicit value for the constant $C$ is unkown.
The purpose of this short paper is to provide the best possible constant $C$ for some locally symmetric spaces. We call $C$ a Gromov constant.

Our method is extremely simple in the sense that the techniques only use length-trace inequality, norm of a prime ideal in algebraic integers of a totally real number field, and the size of a  group over finite field. 
Especially we will use the following table throughout the paper.

\begin{table}[ht]
\caption{Size of classical semisimple groups over $\mathbb F_q$}
\centering
\begin{tabular}{c c}
\hline\hline
type of $G$    &  cardinality of $G(\mathbb F_q)$  \\ [0.5ex]
\hline
${}^1 A_r (r\geq 1)$   &     $q^{r(r+1)/2}\Pi_{j=1}^r(q^{j+1}-1)$ \\  [0.5ex]
${}^2 A_r (r\geq 2)$   &      $q^{r(r+1)/2}\Pi_{j=1}^r(q^{j+1}-(-1)^{j=1})$ \\ [0.5ex]
$B_r$ or $C_r (r\geq 2)$ &    $q^{r^2}\Pi_{j=1}^r (q^{2j}-1)$  \\ [0.5ex]
${}^1 D_r (r\geq 4)$       &   $q^{r(r-1)}(q^r-1)\Pi_{j=1}^{r-1}(q^{2j}-1)$ \\[0.5ex]
${}^2 D_r (r\geq 4)$       &   $q^{r(r-1)}(q^r+1)\Pi_{j=1}^{r-1}(q^{2j}-1)$ \\ [1ex]
\hline
\end{tabular}
\label{table}
\end{table}

For a given arithmetic lattice $\Gamma=\G(\cal O_F)$ defined over totally real number field $F$, we will consider the
congruence coverings corresponding to prime ideals $I\subset \cal O_F$, i.e.
$$\Gamma(I)=\Gamma \cap ker(\G(\cal O_F)\ra \G(\cal O_F/I))$$ via suitable embedding of
$\G$ into some linear group.
Denote  the corresponding cover of $M$ by $M_I$. 
We will estimate $C$ in the inequality 
$$ \sys_1(M_I)\geq C \log (\vol(M_I)) -c,$$ where $c$ is independent of $I\subset \mathcal O_F$.
\begin{thm}$C$ depends on the type of $\G$.
\begin{enumerate}
\item For arithmetic lattices in $SO(1,n)$, $C=\frac{4}{n(n+1)}$.
\item For arithmetic lattices in $SU(n,1)$ of the first type, $C=\frac{4}{n(n+2)}$, of the other type, $C=\frac{2}{n(n+2)}$.
\item For arithmetic lattices in $SL(n+1,\R)$, $C=\frac{\sqrt 2}{n(n+2)}$.
\end{enumerate}
\end{thm}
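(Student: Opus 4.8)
The plan is to reduce the inequality to an arithmetic statement about a single non‑trivial element of the congruence subgroup. A closed geodesic of length $\ell$ in $M_I$ is represented by a semisimple $\gamma\in\Gamma(I)\setminus\{1\}$ whose translation length on the symmetric space is $\ell$, so it is enough to prove $\ell(\gamma)\ge C\log\vol(M_I)-c$ for every such $\gamma$. Write $\gamma=\mathrm{Id}+A$; the congruence condition says that every entry of $A$ lies in $I$ (inside $\mathcal O_F$, or inside $\mathcal O_E$ in the unitary case), hence every polynomial invariant of $\gamma$ that is homogeneous of degree $j$ in the entries of $A$ — an elementary symmetric function $e_j(A)$ of the eigenvalues of $A$, a $j\times j$ minor, a reduced‑norm‑type expression — lies in the ideal $I^{\,j}$. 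On the other hand the length–trace inequality bounds such an invariant from above in terms of $\ell(\gamma)$ at the distinguished archimedean place, while at the remaining $d-1$ archimedean places $\gamma$ lies in a compact group, so the same invariant is bounded there by a constant depending only on $\mathbf G$.

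The lower bound now comes from the product formula. Let $P=e_{j_0}(A)$ be the first non‑vanishing such invariant (if they all vanish $A$ is nilpotent and $\gamma$ unipotent, giving no closed geodesic); $P$ is a non‑zero algebraic integer in $I^{\,j_0}$, so its $\mathbb Q$‑norm is divisible by $N(I)^{\,j_0}$, and by $N(I)^{2j_0}$ if one works over $\mathcal O_E$, since $N_{E/\mathbb Q}$ of the ideal $I\mathcal O_E$ equals $N(I)^{[E:F]}=N(I)^2$. Combining with the upper bounds at all archimedean places gives
$$N(I)^{k}\ \le\ \bigl|N_{F/\mathbb Q}(P)\bigr|\ \le\ C_0\,e^{s\,\ell(\gamma)},$$
(with $N_{E/\mathbb Q}$ in place of $N_{F/\mathbb Q}$ when $P\in\mathcal O_E$), where $k$ and $s$ are read off from $j_0$, from the choice of model, and from the precise eigenvalue–length relation; hence $\ell(\gamma)\ge\tfrac ks\log N(I)-c$, and optimising the invariant over the possible eigenvalue types of $\gamma$ gives the sharp value of $\tfrac ks$. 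Finally $\vol(M_I)=[\Gamma:\Gamma(I)]\,\vol(M)$ and, by strong approximation, $[\Gamma:\Gamma(I)]=|\mathbf G(\mathcal O_F/I)|=|\mathbf G(\mathbb F_q)|$ with $q=N(I)$; Table~\ref{table} shows $|\mathbf G(\mathbb F_q)|\asymp q^{\dim\mathbf G}$, so $\log\vol(M_I)=\dim\mathbf G\cdot\log N(I)+O(1)$ and $C=\dfrac{k}{s\,\dim\mathbf G}$.

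It remains to run this case by case. For $SO(1,n)$, $\mathbf G$ is a $B_r$‑ or $D_r$‑form with $\dim\mathbf G=\tfrac{n(n+1)}2$; a loxodromic $\gamma$ has eigenvalues $e^{\pm\ell}$ together with unimodular ones and $\det\gamma=1$, so $e_2(A)\in I^2$ is non‑zero with $|e_2(A)|\asymp e^{\ell}$, giving $\tfrac ks=2$ and $C=\tfrac4{n(n+1)}$. For $SU(n,1)$, $\mathbf G$ is a ${}^2A_n$‑form with $\dim\mathbf G=n(n+2)$ and the largest eigenvalue of a loxodromic element is $e^{\ell/2}$; working over the CM ring $\mathcal O_E$ (the "first type", Hermitian‑form lattices) one gains the factor $[E:F]=2$ in the norm estimate and gets effectively $\tfrac ks=4$, i.e. $C=\tfrac4{n(n+2)}$, whereas the $2(n+1)$‑dimensional realisation over $\mathcal O_F$ (the "other type") gives only $\tfrac ks=2$, i.e. $C=\tfrac2{n(n+2)}$. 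For $SL(n+1,\mathbb R)$, $\mathbf G$ is ${}^1A_n$ with $\dim\mathbf G=n(n+2)$; the key point is that $\det(\mathrm{Id}+A)=1$ forces $\sum_{j\ge1}e_j(A)=0$, hence $e_1(A)=\mathrm{tr}\,\gamma-(n+1)\in I^2$, and the vanishing of the lower power sums of $A$ forces at least two eigenvalues of $\gamma$ to have log‑modulus $\gtrsim\log N(I)$ (or a single one with log‑modulus $\gtrsim2\log N(I)$), so with the trace‑form normalisation $\ell(\gamma)^2=\sum t_i^2$ one gets $\ell(\gamma)\ge\sqrt2\,\log N(I)-c$ and $C=\tfrac{\sqrt2}{n(n+2)}$.

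The main difficulty is keeping the lower bound uniform over all non‑trivial $\gamma$, not just the regular ones: screw motions in $SO(1,n)$, elements with repeated or unimodular "rotational" eigenvalues, and elements lying in proper subfields in the division‑algebra realisations must all be accommodated — which is exactly what the elementary‑symmetric‑function bookkeeping, combined with the relation $\det\gamma=1$, achieves — the delicate part being to bound $|P|$ at the distinguished place uniformly in the rotational part. The second, and I expect harder, half is sharpness of $C$: one must exhibit, for infinitely many $I$, an element $\gamma\in\Gamma(I)$ with $\ell(\gamma)\le C\log\vol(M_I)+c$, i.e. a unit or isometry congruent to $1$ modulo $I$ but not modulo the next power, with eigenvalues as small as the norm obstruction permits; this calls for an explicit arithmetic construction rather than a counting argument.
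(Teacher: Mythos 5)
Your skeleton---the length--trace inequality at the identity place, compactness of the Galois conjugates, the norm of the ideal via the product over embeddings, and Table~\ref{table} to convert $\log \N(I)$ into $\log\vol(M_I)$---is exactly the paper's. But the paper's execution is much simpler, and several of your case-by-case refinements are wrong or inconsistent with the constants you are trying to prove. In every case the paper uses only the degree-one invariant $e_1(A)=\sum c_i=\tr\gamma-N\in I$ (first power of $I$ only), bounds $|\tr\gamma|\le (n-1)+2e^{\ell/2}$ (resp.\ $|\tr\gamma|\le (n+1)e^{\ell/\sqrt2}$ for $SL(n+1,\R)$), and concludes $\ell\ge 2\log \N(I)-c$ (resp.\ $\sqrt2\log \N(I)-c$). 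For $SO(1,n)$ your claim $|e_2(A)|\asymp e^{\ell}$ fails: the eigenvalues of $A=\gamma-\mathrm{Id}$ are $e^{\pm\ell/2}-1$ together with numbers of modulus at most $2$, so every degree-two product is $O(e^{\ell/2})$; with the correct archimedean bound your ratio would be $k/s=4$, i.e.\ Murillo's constant $8/n(n+1)$, not the paper's $4/n(n+1)$ (which it obtains for the quaternion-algebra lattices from the trace alone), and in any case $e_2(A)$ may vanish. For $SU(n,1)$ of the first type the factor $2$ does not come from ``gaining $[E:F]=2$ in the norm'': the conjugate archimedean place of $l$ over the identity place of $K$ contributes $|\overline{\sum c_i}|=|\sum c_i|$, so the CM norm inequality returns $N(I)^2\le C|\sum c_i|^2$, i.e.\ $|\sum c_i|\gtrsim \N(I)$ and nothing more. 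The actual mechanism (which the paper does not reprove but cites from Emery--Kim--Murillo) is that the unitarity relation $\gamma^*h\gamma=h$ forces the relevant trace expression into $I^2$.

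For $SL(n+1,\R)$ your own observation $e_1(A)=-\sum_{j\ge2}e_j(A)\in I^2$, combined with $\ell\ge\sqrt2\log\bigl(|\tr\gamma|/(n+1)\bigr)$, would give $\ell\ge 2\sqrt2\log \N(I)-c$ and hence $C=2\sqrt2/n(n+2)$, twice the stated constant; so your sketch does not land on $\sqrt2/n(n+2)$. The paper only uses $e_1(A)\in I$ here and explicitly flags the missing factor of $2$ as a loose end, attributing it to the length--trace inequality rather than to the congruence depth. Two further points. First, both you and the paper need the chosen invariant to be nonzero; your ``first non-vanishing $e_{j_0}$'' device changes $k$ and $s$ simultaneously and you never verify that the ratio $k/s$ survives for non-regular elements, which is precisely the uniformity issue you flag as the main difficulty. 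Second, the paper proves only the lower bound: no element realizing $C\log\vol(M_I)+c$ is constructed anywhere, so your closing remark about sharpness identifies something the paper does not do, not something you must supply to match its proof.
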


In the last section, we deal with the uniform lower bound for systoles of nonuniform arithmetic complex hyperbolic lattices
using complex Salem number.
\section{Real rank one case}
\subsection{Real and Quaternionic hyperbolic space}
For arithmetic lattices of the first type in real hyperbolic case, Murillo \cite{M} proved that
$$\sys_{1}(T_{I})\geq\frac{8}{n(n+1)}\log(\vol(T_{I}))-d.$$ 
For quaternionic hyperbolic lattice,  Emery-Kim-Murillo \cite{Kim} proved
$$ \sys_{1}(M_{I})\geq\frac{4}{(n+1)(2n+3)}\log\big(\vol(M_{I})\big)-d.$$

We give a systole growth inequality for real hyperbolic cases which have not been covered yet.
Let $F$ be a totally real number field and $\cal O_F$ the ring of integers of $F$. Let $a_1,\cdots, a_n\in \cal O_F$ such that $a_i>0$ for all $i$, but $\sigma(a_j)<0$ for any Galois embedding $\sigma\neq id$.
Then $G=SO(x_0^2-a_1x_1^2-\cdots - a_n x^2_n,\R)=SO(1,n)$, and $G(\cal O_F)$ is an arithmetic lattice of $G$. Even though this construction is exhaustive when $n$ is even, for $n$ odd, we need another
construction using quaternion algebra.

Let $\H^{a,b}_F$ be a quaternion algebra over $F$. Set
$$\tau_r(x_0+x_1 i+ x_2 j+x_3 k)=x_0+x_1 i- x_2 j+ x_3 k.$$  For $A\in GL(m, \H^{a,b}_F)$ with $\tau_r(A^T)=A$, let
$$SU(A, \tau_r, \H^{a,b}_F)=\{g\in SL(m,\H^{a,b}_F)| \tau_r(g^T) Ag= A\}.$$

Let $a$ and $b$ are nonzero real numbers such that either $a$ or $b$ is positive and $x$ is an invertible
element of $\H^{a,b}_{\R}$ such that $x=p+qi+sk$. Define the norm
$$N(x)=x\bar x=p^2-aq^2+abs^2,$$  and
\begin{equation}
\epsilon_{a,b}(x)=\left\{
\begin{aligned}
&1 && \text{if}\  bN_{a,b}(x)>0, \\
&2 && \text{if}\   bN_{a,b}(x)<0, \text{and}\\  
 &  &&  \text{either}  \left\{ \begin{aligned} 
                                       &b<0 \ \text{and}\ p>0,\ \text{or}\\
                                        & b>0\ \text{and}\ (a+1)q+(a-1)s\sqrt b>0,       \\ \end{aligned}\right. \\
&0  && \text{otherwise.}   \\
\end{aligned} \right.
\end{equation}

Let $a,b\in F^\times$ such that for each Galois embedding $\sigma$ of $F$, either $\sigma(a)$ or $\sigma(b)$ is positive, and choose $a_1,\cdots,a_m\in \H^{a,b}_F$ such that
\begin{enumerate}
\item $\tau_r(a_l)=a_l$
\item $\sigma(a_l)$ is invertible for each $l$ and $\sigma$
\item $\sum_{l=1}^m \epsilon_{a,b}(a_l)=1$, and
\item $\sum_{l=1}^m \epsilon_{\sigma(a),\sigma(b)}(\sigma(a_l))\in \{0,2m\}$ for each $\sigma\neq id$,
\end{enumerate}
Then
$\G=SU(diag(a_1,\cdots,a_m),\tau_r, \H^{a,b}_F)^0=SO(1,2m-1)^0$ and $\Gamma=\G(\cal O)$ is an arithmetic
lattice of $\G$ where $\cal O$ is an order of $\H^{a,b}_F$. For details, see \cite{Dave}.
As a vector space over $F$, $(\H^{a,b}_F)^m$ is isomorphic to $F^{4m}$, and hence one can identify
$GL(m, \H^{a,b}_F)$ with $GL(4m, F)$. Under this identification, $\G(\cal O)$ sits inside $GL(4m, \cal O_F)$.

A hyperbolic isometry in $SO(1,n)$ can be conjugate to a form
$$\begin{pmatrix}
       SO(n-1)   &  0 & 0 \\
          0         & \cosh r  & \sinh r \\
          0         & \sinh r   & \cosh r \end{pmatrix}$$
whose translation length $\ell_A$ is $2r$. Let $\lambda_1,\cdots,\lambda_{n_1}$ be eigenvalues of $SO(n-1)$ part of $A$. Then
$$|tr A|=|\lambda_1+\cdots+ \lambda_{n-1}+2\cosh r|\leq (n-1)+ 2e^r,$$ hence
$$|tr A|\leq (n-1)+ 2 e^{\frac{\ell_A}{2}}.$$  We get
$$\ell_A\geq 2 \log \frac{|tr A|-(n-1)}{2}.$$

For any $A=(a_{ij})\in \Gamma(I)$ for a prime ideal $I\subset \cal O_F$, where
$$\Gamma(I)=\Gamma \cap ker ( M(4m,\cal O_F)\ra M(4m, \cal O_F/I)),$$
$a_{ii}=1+c_i$ with $c_i\in I$, and since $G^\sigma$ is compact for any Galois embedding $\sigma\neq id$ of $F$,
$$|tr A^\sigma|=|\sum \sigma(1+c_i)|=|4m+\sum \sigma(c_i)|\leq 4m.$$
Hence $|\sum \sigma(c_i)|\leq 8m$. This impies
$$N(I)\leq N(\sum c_i)=|\sum c_i|\Pi_{\sigma\neq id}|\sigma(\sum c_i)|\leq (8m)^{f-1}|\sum c_i|,$$ where $f$ is a degree of $F$ over $\Q$.
Then
$$|tr A|=|4m+\sum c_i|\geq |\sum c_i|-4m\geq \frac{N(I)}{(8m)^{f-1}}-4m, $$ which implies
$$\ell_A\geq 2\log\frac{|tr A|-(2m-1-1)}{2}\geq 2\log N(I)- c .$$
Then we get
$$ \sys_1(M_I)\geq 2 \log (\N(I)) -c,$$ where $c$ is independent of $I\subset \mathcal O_K$.
 But $\textbf{G}=SO(1, n=2m-1)$ is of type ${}^2D_m$, hence accroding to the Table \ref{table}
$$|\textbf{G}(\mathcal O_k/I)|\leq \N(I)^{{m(2m-1)}}.$$

From $\vol(M_I)=\vol(M)[\Gamma:\Gamma_I]$ and $$0\ra \Gamma(I)\ra \Gamma=\G(\cal O_F)\ra \G(\cal O_F/I)\ra 0,$$
$$[\Gamma:\Gamma_I]\leq |SO(1,n;\cal O_F/I)|\leq N(I)^{m(2m-1)},$$
  for an arithmetic real hyperbolic manifold of dimension $n=2m-1\geq 7$
$$\sys_1(M_I)\geq \frac{4}{n(n+1)}\log(\vol(M_I))- d.$$
Note that for real hyperbolic manifold of dimension $n=2m\geq 4$, it is of type $B_m$, and
$$|\textbf{G}(\mathcal O_k/I)|\leq \N(I)^{{m(2m+1)}}.$$
Hence 
$$\sys_1(M_I)\geq \frac{4}{n(n+1)}\log(\vol(M_I))- d.$$
In any case, the constant is half of the one given by Murillo.

\subsection{Complex hyperbolic space}
\subsubsection{Arithmetic lattices in $SU(n,1)$}
Let $\Gamma\subset SU(n,1), \ n>1$ be an arithmetic lattice of the {\bf first type} defined over a totally real number field $K$ with a totally complex quadratic extension field $l$ such that $\Gamma$ is commensurable to 
$SU(h)(\mathcal O_l)$ where $h$ is a non-degenerate admissible Hermitian form over $V=l^{n+1}$ of signature $(n,1)$.
By  a proper embedding of $SU(h)$ in $GL_N$ for large $N$, we can assume that an element in $SU(h)(\mathcal O_l)$
is a matrix in $GL_N(\mathcal O_K)$. It suffices to take $N=2n+2$ as we can see below.
%More concretely, one can take 
%$$h(x,y)=\sum_{i=1}^n a_i \bar x_i y_i - a_{n+1}\bar x_{n+1}y_{n+1},$$ where $a_i\in \mathcal O_K$,
%and let $L=\mathcal O_l^{n+1}$ where $\mathcal O_l$ is the ring of integers of $l$.
%Then $$G_L=\{g\in U(V,h)| gL=L\}=U(V,h)\cap GL(n+1,\mathcal O_l)$$ is commensurable to $SU(h)(\mathcal %O_K)$.

Another way to describe an arithmetic lattice of the first type defined over $K$ is as follows. Let $h$ be a Hermitian form over $l$ with coefficient matrix $A=(a_{ij})$, i.e.,
$$h(z)=z^* A z$$ with $a_{ij}\in l$. Then
$$\G=U(h, l)=\{T\in GL(n+1,l)| T^*AT=A\}.$$
Using the restriction of scalars $R_{l|K}(\G)$, $\G(l)$ corresponds to $R_{l|K}(\G)(K)$, and we can  ensure that
$U(h,l)$ sits inside  $GL(2n+2, K)$. Under this correspondence, $SU(h)(\cal O_l)$ sits inside
$GL(2n+2, \cal O_K)$.
The form $h$ is admissible if it has signature $(n,1)$, and for any non-identity embedding $\sigma:l \rightarrow \mathbb R$, the Hermitian form
$h^\sigma$ has signature $n+1$.

%A subgroup $\Gamma\subset SU(n,1)$ is an arithmetic lattice of the first type over $K$ if there exist
%an admissible Hermitian form $h$ over $K$, and $W\in GL(n+1,\C)$ such that  $W^{-1}\Gamma %W\subset U(h,l)$ and
%\begin{enumerate}
%\item $h(z)= (Wz)^* J (Wz)$
%\item  $W^{-1}\Gamma W$ and $SU(h,\mathcal O_l)$ are commensurable.
%\end{enumerate}
%Often we identify $\Gamma$ with $W^{-1}\Gamma W$ if the context is clear.

An arithmetic lattice of the {\bf second type} is constructed as follows. Let $L$ be a cyclic extension of $l$ and choose
embeddings $\lambda_j:L\ra \mathbb C$ compatible with $\tau_i:l\ra \mathbb C$ extending $\sigma_i:K\ra\mathbb R$.
Identify $L\ra\mathbb C$ via $\lambda_1$. Given a cyclic $l$-algebra $A$, identify $A\otimes_l L=M(n+1, L)$. For each
$\lambda_i\neq\lambda_1$, one obtains a new algebra $A^{\lambda_i}$.  Now take a unitary division algebra
$(A,*)$ of degree $n+1$ over $l$, with a Hermitian element $h$ (i.e., $h^*=h$) of signature $(n,1)$. 
Let $x^{\star h}=\mu_h\circ x^*=h x^*h$ and
$$SU(h)=\{x\in(A\otimes_l \mathbb C)^\times: xx^{\star h}=1\}=SU(n,1).$$

If $h$ is admissible, i.e. $SU(h^{\tau_j})$ is compact for $j\neq 1$, then
$$SU(h, \mathcal O)=\{x\in\mathcal O^{\times}: xx^{\star h}=1\}$$ is a cocompact lattice in $SU(n,1)$ where $\mathcal O$ is an $\mathcal O_l$ order in $A$.
This arithmetic lattice is called of the second type.
Even in this case, under the embedding $A\ra A\otimes_l L=M(n+1, L)$, an element in $SU(h,\mathcal O)$ lands in
$M(n+1, \mathcal O_l)$, and finally in $M(2n+2, \mathcal O_K)$.

Finally we introduce the {\bf mixed type} arithmetic lattices.  Let $(A,*)$ be a unitary algebra over $l$ of degree $d$. The simple $l$-algebra $M(r, A)$ admits an involution of second kind given by $*$-transposition.
If $L$ is a splitting field for $A$, then
$$M(r,A)\otimes_l L=M(r, A\otimes_l L)=M(r, M(d,L))=M(rd=n+1,L).$$
If a Hermitian element $h\in M(r,A)$ has signature $(n,1)$ in the splitting $M(r,A)\otimes_l L$,
$SU(h, A\otimes_l \mathbb C)=SU(n,1)$ and if $SU(h^{\tau_j}, A\otimes_l \mathbb C)$ is compact for any $\tau_j\neq \tau_1$, then $SU(h, \cal O)$ is an arithmetic lattice in $SU(n,1)$ for  any $\cal O_l$-order $\cal O$. Under the embedding $A\ra A\otimes_l L=M(d, L)$, an element in $SU(h,\mathcal O)$ lands in
$M(n+1, \mathcal O_l)$, and finally in $M(2n+2, \mathcal O_K)$.
For arithmetic lattices in complex hyperbolic space, refer to \cite{Mc}.

\subsubsection{Systole growth}
%In this section, we adapt previous arguments to obtain a result for complex hyperbolic space.
%In general, an arithmetic lattice is $PU(n,1)$ can be obtained as follows.
%For a totally real number field $k$, take a totally complex quadratic extension $l$, and a division algebra $D$ %with center $l$, and with an involution $\sigma$ of the second kind such that $k$ is the fixed point of %$\sigma$ in $l$. Take a non-degenerate hermitian form $h$ on $D^k$ where $k=\frac{n+1}{s}$ where
%$s=\sqrt{[D:l]}$. Then any $\Gamma$ commensurable to $SU(h)(\mathcal O_k)$ is an arithmetic lattice.
%
%We can use the construction of $\mathbf{G}_L$ for quaternionic lattice to obtain an arithmetic complex %hyperbolic lattice by restricting quaternion numbers to complex numbers. This amounts to taking $D=l$
%in the above construction.

If  $A\in SU(n,1)$ is hyperbolic, it is conjugate to a matrix of the form
\begin{eqnarray}\label{hyperbolic}
 \begin{pmatrix}
 U  & 0 & 0\\
0   & \lambda & 0\\
0   &   0          &\beta \end{pmatrix}\end{eqnarray} where $U\in U(n-1)$, $\lambda=\overline{\beta^{-1}}$ with $|\lambda|>1$ and the other eigenvalues $\lambda_1,\cdots,\lambda_{n-1}$ all lie on the unit circle. The translation length of $A$ is
$$\ell_A=2\log |\lambda|.$$
Since $$|tr A|=|\lambda_1+\cdots+\lambda_{n-1}+\lambda+\beta|\leq (n-1)+2|\lambda|,$$
$$\ell_A\geq 2\log \frac{|tr A|-(n-1)}{2}.$$

For any $A=(a_{ij})\in \Gamma(I)$ for any prime ideal $I\subset \cal O_K$, where
$$\Gamma(I)=\Gamma \cap ker ( M(2n+2,\cal O_K)\ra M(2n+2, \cal O_K/I)),$$
$a_{ii}=1+c_i$ with $c_i\in I$,
$$|tr A^\sigma|=|\sum \sigma(1+c_i)|=|2n+2+\sum \sigma(c_i)|\leq 2n+2.$$
Hence $|\sum \sigma(c_i)|\leq 4n+4$. This impies
$$N(I)\leq N(\sum c_i)=|\sum c_i|\Pi_{\sigma\neq id}|\sigma(\sum c_i)|\leq (4n+4)^{f-1}|\sum c_i|,$$ where $f$ is a degree of $K$ over $\Q$.
Then
$$|tr A|=|2n+2+\sum c_i|\geq |\sum c_i|-2n-2\geq \frac{N(I)}{(4n+4)^{f-1}}-2n-2,$$ hence
$$\ell_A\geq 2 \log \frac{|tr A|-(n-1)}{2}\geq 2\log[\frac{N(I)}{2(4n+4)^{f-1}}-\frac{3n+1}{2}]\geq 2\log N(I)- c. $$

Hence we obtain 
$$ \sys_1(M_I)\geq 2 \log (\N(I)) -c,$$ where $c$ is independent of $I\subset \mathcal O_K$.
 $\textbf{G}=SU(n,1)$ is of type ${}^2A_n$, hence
$$|\textbf{G}(\mathcal O_k/I)|\leq \N(I)^{{n(n+2)}}.$$
Finally we obtain for an arithmetic complex hyperbolic manifold
$$\sys_1(M_I)\geq \frac{2}{n(n+2)}\log(\vol(M_I))- d.$$

For an arithmetic lattice of the first type,  more precise estimates in \cite{Kim} goes through for this case, and one obtains
$$\sys_1(M_I)\geq \frac{4}{n(n+2)}\log(\vol(M_I))- d.$$
\section{Arithmetic lattice in $SL(n,\R)$}
For a hyperbolic isometry $A\in SL(n,\R)$, the translation length of $A$ satisfies
\begin{eqnarray}\label{length}
\ell_A\geq \sqrt{2}\arccosh{\frac{|tr A|}{n}}\geq \sqrt{2}\log \frac{|tr A|}{n}.
\end{eqnarray}
See \cite{Jeff} for a proof. Note here that the metric on $SL(n,\R)/SO(n)$ is normalized so that the natural totally geodesic embedding of $SL(2,\R)/SO(2)$ has the sectional curvature equal to $-1$.

Let $L$ be a totally real quadratic extension of a totally real number field $F$ of degree $f$ over $\Q$ such that
$$\Gamma=SU(B=(b_1,\cdots,b_m), \tau; \cal O_D)=\{g\in SL(m, \cal O_D)| (g^\tau)^TB g=B \}$$
$$\subset SL(n=md, \cal O_F)$$ is a uniform lattice, where
$D$ is a central simple division algebra of degree $d$ over $L$, $b_i\in D^\times$,  $\tau$ is an anti-involution of $D$, $\tau(b_i)=b_i$, and $\cal O_D$ is an order of $D$. Here $SU(B, \tau; \cal O_D)^\sigma$ is  compact for any nontrivial Galois embedding $\sigma$ of $F$.
%Then  
%$$N(tr A)=|tr A|\Pi_{\sigma\neq id} |tr A^\sigma|,$$  where $\sigma:F\ra \R$ is a Galois %embedding. 
Note that for any $A\in \Gamma$,  the eigenvalues of $A^\sigma$ have norm 1, hence $|tr A^\sigma|\leq n$. 
%Hence
%$$  \ell(A)\geq \sqrt{2}\log \frac{|tr A|}{n}\geq     \sqrt{2}\log \frac{N(tr A)}{n^f}    .$$

Suppose $N(I)=|\cal O_F/I|$ is large for a prime ideal $I\subset \cal O_F$. Then for any
$A=(a_{ij}) \in \Gamma(I)$ where $$\Gamma(I)=\Gamma\cap ker (SL(n, \cal O_F)\rightarrow SL(n, \cal O_F/I)),$$
$a_{ii}=1+c_i$ for $c_i\in I$ and $a_{ij}\in I$.  Hence
$$|tr A^\sigma|=|\sum \sigma(1+c_i)|=|n+\sum \sigma(c_i)|\leq n.$$

This implies that 
$$|\sum\sigma(c_i)|\leq 2n.$$ Then
$$N(I)\leq N(c_1+\cdots+c_n)=|c_1+\cdots+ c_n|\Pi_{\sigma\neq id}| \sigma(\sum c_i)|\leq (2n)^{f-1}|c_1+\cdots+ c_n|.$$ Finally we get
$$|tr A|=|n+c_1+\cdots+ c_n|\geq |c_1+\cdots+ c_n|- n\geq \frac{N(I)}{(2n)^{f-1}}- n.$$
 
%If  $C=(c_{ij})\in SU(B,\tau;\cal O_D)$,
%\begin{equation}
%\sum_{i=1}^{m} c_{ij}^\tau b_{i} c_{i j}=b_{j},
%\mbox{ for }j=1,\ldots ,m.
%\end{equation}
%
%Under an embedding $\phi:D\ra  M(d, \R)$, the above equation entails equations in $\cal O_F$ %coefficients since $\phi(c_{ij})\in M(d, \cal O_F)$. By replacing 

%From this equation, one can obtain $N(tr A)\geq  c' N(I)^2$.
Hence  we obtain for $A\in \Gamma(I)$
$$\ell_A\geq \sqrt{2}\log \frac{|tr A|}{n}\geq   \sqrt{2}\log (\frac{N(I)}{n(2n)^{f-1}}-1)  \geq \sqrt 2 \log N(I)+ c .$$

Since $SL(n+1, \R)$ is of type $A_n$, according to Table \ref{table}
$$|SL(n+1,\cal O_F/I)|\leq N(I)^{n(n+2)}.$$

Another way to construct an arithmetic lattice of $SL(n,\R)$ is as follows.
Let $D$ be a central division algebra of degree $d$ over $\Q$, such that $D$ splits over $\R$, and
$\cal O_D$ an order.  Then $\phi(SL(m,\cal O_D))$ is an arithmetic subgroup of $SL(n=dm,\R)$ for any
embedding $\phi:D\ra M(d,\R)$. Then the same argument as above with $F=\Q$ goes through.

Since these two cases cover all types of arithmetic lattices in $SL(n+1,\R)$, see \cite{Dave}, previous arguments give:
\begin{thm}The systol growth of  arithmetic lattices in $SL(n+1,\R)$ under the congruence cover is
$$\sys_1(M_I)\geq   \frac{\sqrt 2}{n(n+2)}   \log(\vol(M_I))-d.$$
\end{thm}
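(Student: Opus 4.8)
The plan is to combine the two length--trace estimates already assembled in this section with the cardinality bound for groups of type $A_n$ over finite fields from Table \ref{table}, exactly parallel to the real and complex hyperbolic cases of Section 1. The final inequality $\sys_1(M_I)\geq \frac{\sqrt2}{n(n+2)}\log(\vol(M_I))-d$ is then obtained by eliminating $\N(I)$ between the systole lower bound and the index upper bound.

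First I would invoke the estimate derived just above the statement: for every hyperbolic $A\in\Gamma(I)$ one has $\ell_A\geq \sqrt2\log\!\big(\tfrac{\N(I)}{n(2n)^{f-1}}-1\big)\geq \sqrt2\log\N(I)+c$, where $c$ (possibly negative) is independent of the prime ideal $I\subset\cal O_F$; this uses only the length--trace inequality \eqref{length}, the compactness of $\G^\sigma$ for nontrivial Galois embeddings $\sigma$ (which forces $|\tr A^\sigma|\leq n$ and hence the norm bound on $\sum c_i$), and the definition of the principal congruence subgroup $\Gamma(I)$. Since $\sys_1(M_I)$ is by definition the minimal length of a closed geodesic in $M_I$ and every closed geodesic corresponds to a hyperbolic element of $\Gamma(I)$, this gives
$$
\sys_1(M_I)\geq \sqrt2\log\N(I)+c.
$$
Next I would bound the index: from the exact sequence $1\to\Gamma(I)\to\Gamma=\G(\cal O_F)\to\G(\cal O_F/I)$ we get $[\Gamma:\Gamma(I)]\leq |\G(\cal O_F/I)|$, and because $\G=SL(n+1,\R)$ is of type $A_n=\,{}^1A_n$, Table \ref{table} yields $|\G(\cal O_F/I)|\leq \N(I)^{n(n+2)}$ (here I use $r(r+1)/2+\sum_{j=1}^r (j+1)=n(n+2)$ for $r=n$, and $|\cal O_F/I|=\N(I)$). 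Hence $\vol(M_I)=\vol(M)\,[\Gamma:\Gamma(I)]\leq \vol(M)\,\N(I)^{n(n+2)}$, so
$$
\log\N(I)\geq \frac{1}{n(n+2)}\big(\log\vol(M_I)-\log\vol(M)\big).
$$

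Substituting this into the systole bound gives $\sys_1(M_I)\geq \frac{\sqrt2}{n(n+2)}\log\vol(M_I)-d$ with $d=\frac{\sqrt2}{n(n+2)}\log\vol(M)-c$ independent of $I$, which is the claim. Finally I would remark that the second construction (with $D$ central over $\Q$ and $F=\Q$) is handled by the identical computation, and since by \cite{Dave} these two constructions exhaust all arithmetic lattices in $SL(n+1,\R)$, the estimate holds in all cases. The only genuinely delicate point is the very first one --- guaranteeing that a closed geodesic of $M_I$ realizing the systole really does correspond to a hyperbolic (loxodromic-type, $\R$-regular) element to which \eqref{length} applies, and that the crude bound $|\tr A|\geq \N(I)/\big(n(2n)^{f-1}\big)-n$ is eventually positive so that the logarithm is defined; but this is automatic once $\N(I)$ is large, and for the finitely many small primes one absorbs the discrepancy into the constant $d$.
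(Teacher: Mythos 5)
Your proposal is correct and matches the paper's argument: the paper's proof of this theorem is exactly the combination of the length--trace bound $\ell_A\geq\sqrt2\log\N(I)+c$ established just above, the index bound $[\Gamma:\Gamma_I]\leq|SL(n+1,\mathcal O_F/I)|\leq \N(I)^{n(n+2)}$ from Table \ref{table}, and $\vol(M_I)=\vol(M)[\Gamma:\Gamma_I]$. Your exponent computation $n(n+1)/2+\sum_{j=1}^n(j+1)=n(n+2)$ and your remark about the two constructions exhausting all arithmetic lattices are both consistent with the paper.
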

\begin{proof}It follows from $\vol(M_I)=\vol(M)[\Gamma:\Gamma_I]$ and
$[\Gamma:\Gamma_I]\leq |SL(n+1,\cal O_F/I)|\leq N(I)^{n(n+2)}.$
\end{proof}
Note that for $n=1$, $SL(2,\R)$ is the isometry group of real hyperbolic plane, and the constant is
$\frac{\sqrt 2}{3}$, which is less than $\frac{4}{3}$.
But Equation (\ref{length}) is responsible for $\sqrt 2$. Indeed in $SL(2,\R)$ case, we have a better
estimate
$$\ell_A\geq 2 \log \frac{|tr A|}{2},$$ and hence the constant is $\frac{2}{3}$ instead of $\frac{\sqrt 2}{3}$. To get the missing factor 2, we need a better estimate for the length-trace inequality to get $N(I)^2$ factor.

This suggests that our constant is not optimal. On the other hand, the constant provided  in \cite{Jeff}
is cubic in $n$.

\section{Uniform lower bound for systoles of nonuniform arithmetic manifolds in complex hyperbolic space}
In this section we prove that
\begin{thm}There exists a uniform lower bound for the length of closed geodesics in non-unform arithmetic
lattices  of $SU(n,1)$ for any $n<N$, once $N$ is fixed. 
\end{thm}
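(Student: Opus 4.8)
The plan is to combine a length--trace inequality for $SU(n,1)$ with an arithmetic lower bound on $|\operatorname{tr} A - (2n+2)|$ coming from the fact that, for a \emph{nonuniform} arithmetic lattice, the number field of definition is $\mathbb Q$ (up to commensurability, by the rank-one nonuniform case of the Godement criterion / Borel--Harish-Chandra, cf. \cite{Mc}), so that the traces of lattice elements are algebraic integers in a complex quadratic field $l$ whose real subfield is $\mathbb Q$. First I would fix $N$ and, for each $n<N$, realize any nonuniform arithmetic $\Gamma\subset SU(n,1)$ inside $GL(2n+2,\mathbb Z)$ exactly as in the first-type construction of Section~1, so that $\operatorname{tr} A\in\mathcal O_l$ for every $A\in\Gamma$, with $l$ imaginary quadratic over $\mathbb Q$. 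The key point is that there is now \emph{no} congruence subgroup and \emph{no} Galois twisting by compact forms to exploit; instead the nonuniformity itself forces a uniform lower bound.

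Next I would carry out the following steps. (i) Take a hyperbolic $A\in\Gamma$, put it in the normal form \eqref{hyperbolic}; as before $\ell_A = 2\log|\lambda|$ and $|\operatorname{tr} A|\le (n-1)+2|\lambda|$, so $\ell_A\ge 2\log\frac{|\operatorname{tr} A|-(n-1)}{2}$. (ii) Show that $|\operatorname{tr} A|$ cannot be too close to $(n-1)$: if it were, then $\operatorname{tr} A$ would be an algebraic integer in $\mathcal O_l$ very close to the real line, and a counting/separation argument on the finitely many possible minimal polynomials (the trace has bounded degree $\le 2$ over $\mathbb Q$ and bounded archimedean size once $\ell_A$ is bounded) gives a definite gap; here one uses that $SU(n,1)$ has no compact factor, so there is no Galois conjugate to absorb the size, and the Salem-number mechanism advertised in the section title enters — the characteristic polynomial of $A$ acting on $\mathbb C^{n+1}$ (equivalently on $\mathbb R^{2n+2}$) is an integer polynomial all of whose roots except $\lambda,\overline{\lambda^{-1}}$ lie on the unit circle, i.e. $\lambda$ is (the square of) a complex Salem number, and complex Salem numbers are bounded away from $1$ by a constant depending only on their degree, hence only on $N$. (iii) Conclude $\ell_A\ge 2\log s_0(N) =: \delta(N)>0$, uniformly over all nonuniform arithmetic $\Gamma\subset SU(n,1)$ with $n<N$.

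The main obstacle I expect is step (ii): making precise the statement that a complex Salem number of bounded degree is bounded away from $1$, and more importantly checking that $\lambda$ really \emph{is} such a Salem number for every hyperbolic element of a nonuniform arithmetic lattice — one must verify that the other eigenvalues lie exactly on the unit circle (true from the normal form \eqref{hyperbolic}), that the full set $\{\lambda,\overline{\lambda^{-1}},\lambda_1,\dots,\lambda_{n-1}\}$ is closed under $\operatorname{Gal}(\overline{\mathbb Q}/\mathbb Q)$ so that it is the root set of a $\mathbb Z$-polynomial (this is where one uses that $A\in GL(2n+2,\mathbb Z)$ and $\det = 1$), and that $\lambda\ne\overline{\lambda^{-1}}$ so the element is genuinely not elliptic. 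A secondary technical point is reducing the general nonuniform arithmetic lattice (of first, second, or mixed type in the sense of Section~1.2.1) to the defining field being $\mathbb Q$: for nonuniform lattices the invariant trace field's totally real subfield must be $\mathbb Q$, since otherwise a nontrivial Galois conjugate would give a second real place and a compact factor forcing cocompactness by Godement's criterion — I would cite \cite{Mc,Dave} for this classification rather than reprove it. Once these are in place the uniform bound $\operatorname{sys}_1 \ge \delta(N)$ follows immediately by taking the infimum over hyperbolic conjugacy classes, since every closed geodesic in a nonuniform hyperbolic manifold corresponds to a hyperbolic element.
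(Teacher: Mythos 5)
Your proposal follows essentially the same route as the paper: reduce the nonuniform case to $K=\mathbb Q$, show via the integral normal form that the dominant eigenvalue $\lambda$ of a hyperbolic element is a complex Salem number of degree bounded in terms of $N$, and invoke the finiteness of monic integer polynomials of bounded degree and bounded Mahler measure to bound $|\lambda|$ away from $1$, hence $\ell_A=2\log|\lambda|$ away from $0$. The only caveat is that the detour through the trace inequality in your steps (i)--(ii) is unnecessary and its trace-separation sub-argument would not work on its own (the inequality is vacuous when $|\operatorname{tr}A|\le n-1$); the genuine content, which you correctly identify as the main obstacle, is the Galois/irreducibility argument the paper carries out to show that $\lambda$ really is a complex Salem number.
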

First of all, we need a concept of complex Salem number.
\begin{defn}A complex number $\lambda=r e^{i\theta} (r>1) $ is a {\bf complex Salem number} if  $\lambda$ is a root of monic polynomial over $\mathbb Z$ such that other roots except $\lambda,\bar\lambda$ and $\bar\lambda^{-1}, \lambda^{-1}$ all lie on the unit circle. The monic minimal polynomial of $\lambda$ over $\Q$ is called a Salem polynomial of $\lambda$, $s_\lambda(z)=s(z)$.
\end{defn}
Here are some examples of minimal polynomials of complex Salem numbers.
$$1 + x + x^2 - x^3 + x^4 + x^5 + x^6 $$
$$1 + x^3 + x^4 + x^5 + x^8 $$
$$1 + x - x^5 + x^9 + x^{10}.$$

\begin{thm}Let $\Gamma\subset SU(n,1)$ be a complex hyperbolic arithmetic lattice  over $K$. Then for any hyperbolic isometry $\gamma$, the largest (in absolute value) eigenvalue is a complex Salem number.
\end{thm}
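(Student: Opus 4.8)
The plan is to realize any hyperbolic $\gamma\in\Gamma$ as an algebraic integer via the matrix embeddings developed in the previous section, and then show that the Galois conjugates forced by the arithmetic structure are exactly those in the definition of a complex Salem number. First I would fix the embedding: as in \S1.2.1, an arithmetic lattice $\Gamma\subset SU(n,1)$ over the totally real field $K$ (with totally complex quadratic extension $l$, and possibly a further splitting field $L$) sits inside $GL(N,\mathcal O_K)$ for $N=2n+2$ (or inside $M(n+1,\mathcal O_l)$ before restriction of scalars). Hence for $\gamma\in\Gamma$ the characteristic polynomial $p_\gamma(z)\in\mathcal O_K[z]$, and its largest eigenvalue $\lambda=re^{i\theta}$ with $r>1$ is an algebraic integer. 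The minimal polynomial $s(z)$ of $\lambda$ over $\mathbb Q$ divides $\prod_{\sigma:K\to\mathbb R}p_\gamma^\sigma(z)$, so every root of $s$ is a root of some $p_\gamma^\sigma$.

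Next I would analyze the roots of $p_\gamma^\sigma$ for each Galois embedding $\sigma$ of $K$. For $\sigma=\mathrm{id}$: since $\gamma$ is hyperbolic in $SU(n,1)$, by the normal form \eqref{hyperbolic} its eigenvalues are $\lambda$, $\beta=\overline{\lambda^{-1}}$, and $n-1$ eigenvalues on the unit circle; after restriction of scalars the characteristic polynomial over $K$ has, in addition, the complex-conjugate set, namely $\bar\lambda$, $\lambda^{-1}$, and $n-1$ further unit-modulus numbers. So the $\mathrm{id}$-contribution to the roots is $\{\lambda,\bar\lambda,\lambda^{-1},\bar\lambda^{-1}\}$ together with numbers on the unit circle. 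For $\sigma\neq\mathrm{id}$: admissibility of the form means $SU(h^\sigma)$ (equivalently $SU(h^{\tau_j})$ in the second/mixed type) is compact, so $\gamma^\sigma$ lies in a compact group and all its eigenvalues have absolute value $1$; thus $p_\gamma^\sigma$ contributes only unit-modulus roots. Combining, every root of $s(z)$ lies in $\{\lambda,\bar\lambda,\lambda^{-1},\bar\lambda^{-1}\}\cup\{z:|z|=1\}$.

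To finish I would observe that $\lambda$ is a root of $s$ by construction, and that $s\in\mathbb Z[z]$ is monic with nonzero constant term, so $\lambda^{-1}$ and the complex conjugates $\bar\lambda,\bar\lambda^{-1}$ are also among its roots (the root set of a $\mathbb Q$-minimal polynomial is Galois-stable, hence closed under complex conjugation, and closed under $z\mapsto z^{-1}$ once we know $z^{-1}$ is algebraic of the same degree — which follows since $s$ has the four numbers $\lambda,\bar\lambda,\lambda^{-1},\bar\lambda^{-1}$ all appearing as roots of the product $\prod_\sigma p_\gamma^\sigma$ and $[\mathbb Q(\lambda):\mathbb Q]=[\mathbb Q(\lambda^{-1}):\mathbb Q]$). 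Every other root of $s$ then has absolute value $1$ by the previous paragraph. That is exactly the defining property of a complex Salem number, so $\lambda=s_\lambda$-root is a complex Salem number, as claimed.

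The main obstacle is the bookkeeping in the middle step: one must be careful that restriction of scalars from $l$ (or $L$) down to $K$ does not introduce spurious roots outside the closed unit disk at the non-identity real places, and that $\lambda,\bar\lambda,\lambda^{-1},\bar\lambda^{-1}$ are genuinely \emph{distinct} and off the unit circle (so they are not accidentally absorbed into the "unit circle" part), which uses $r=|\lambda|>1$ and hyperbolicity. One also needs that $\lambda$ is not itself already a Salem number in the classical (real) sense degenerately — but $\theta\notin\{0,\pi\}$ for a genuinely complex-hyperbolic hyperbolic element, so the four associated numbers are indeed distinct and the notion of \emph{complex} Salem number is the correct one; making this dichotomy precise across all three arithmeticity types (first, second, mixed) is where the care is needed.
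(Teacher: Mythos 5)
Your main line of argument is sound and is in fact a cleaner route to the containment statement than the paper's. The paper factors the characteristic polynomial $p(z)$ into irreducibles, isolates the factor $p_1$ containing $\bar\lambda^{-1}$, forms $p_1^*=\prod_i p_1^{\tau_i}$ over all embeddings of $l$, and then proves $p_1^*$ is irreducible over $\mathbb Z$, so that $s=p_1^*$ and the root structure of $s$ can be read off exactly. You instead observe directly that $s$ divides the Galois-invariant product $\prod_\sigma p_\gamma^\sigma\in\mathbb Z[z]$, and that compactness of $SU(h^\sigma)$ at the non-identity places confines all roots of the non-identity factors to the unit circle; this gives, with less bookkeeping, that every root of $s$ lies in $\{\lambda,\bar\lambda,\lambda^{-1},\bar\lambda^{-1}\}\cup S^1$. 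Under the paper's (literal) definition of a complex Salem number that already suffices, and what your approach gives up is only the finer information that $s$ \emph{equals} $p_1^*$.

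There is, however, a genuine gap in your final paragraph. To conclude that $\lambda^{-1}$ and $\bar\lambda^{-1}$ are actually roots of $s$ you argue that these numbers "appear as roots of the product $\prod_\sigma p_\gamma^\sigma$" and that $[\mathbb Q(\lambda):\mathbb Q]=[\mathbb Q(\lambda^{-1}):\mathbb Q]$. Neither fact implies they are roots of the particular irreducible factor $s$: a root of the product need only be a root of \emph{some} irreducible factor, and equality of degrees of minimal polynomials does not force the polynomials to coincide. The correct repair is an integrality/norm argument: since $\gamma$ and $\gamma^{-1}$ both lie in $GL(N,\mathcal O_K)$, every eigenvalue of $\gamma$ is an algebraic unit, so the constant term of $s$ is $\pm1$ and the product of the absolute values of all roots of $s$ equals $1$. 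As $\lambda$ and $\bar\lambda$ are both roots (the latter because the root set of $s\in\mathbb Q[z]$ is stable under complex conjugation) and contribute $r^2>1$, while every other admissible root has absolute value $\geq r^{-1}$ with equality only for $\lambda^{-1},\bar\lambda^{-1}$, both of these must occur among the roots. With that substitution your proof is complete; the remaining caveats you raise (distinctness of the four numbers, uniformity across the first, second and mixed types, and that restriction of scalars only adjoins complex-conjugate roots) are handled exactly as you indicate.
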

\begin{proof}
Let $\gamma\in \Gamma$ be a hyperbolic isometry, hence $$\gamma^m\in SU(h,\mathcal O)\subset GL(N,\mathcal O_K)$$ for some $m$. Indeed, one can take $N=2n+2$.  If $p(z)$ is the characteristic polynomial of $\gamma$ and $p'(z)$ is the characteristic polynomial of $\gamma^m$, then $p'(z)$ is defined over $\mathcal O_K$ since matrix in $GL(N,\mathcal O_K)$ has coefficients in $\mathcal O_K$. The roots of $p'(z)$ are the $m$th powers of the roots  of $p(z)$. Since the coefficients of $p'(z)$ are in $\mathcal O_K$, the roots of $p'(z)$ are integral over $\mathcal O_K$. Since $\mathcal O_K$ is integral over $\Z$, the roots of $p'(z)$ are integral over $\Z$, i.e. algebraic integers. Since the set of algebraic integers in $\mathbb C$ are integrally closed,  the roots of $p(z)$ are algebraic integers. Hence its coefficients of $p(z)$ are in $\mathcal O_K$. 

 Furthermore
$\gamma$ is conjugate to a matrix of the form (\ref{hyperbolic}). Hence $p(z)$ has a root $\lambda=re^{i\theta}, r>1$, $\bar\lambda^{-1}$, and all other roots on the unit circle. Let $p(z)=p_1(z)\cdots p_k(z)$ be a factorization into monic irreducible polynomials over $\mathcal O_K$ such that $p_1(\bar\lambda^{-1})=0$. Then one can show that $\lambda,\bar\lambda^{-1}$ are roots of $p_1(z)$ as follows.

Suppose not. Then the constant term $w \bar\lambda^{-1}$ of $p_1(z)$ is in $\mathcal O_K$ where $w$ is some unit complex number. Choose non-identity $\sigma:K\rightarrow \mathbb R$ such that $\pm 1\neq\sigma(w\bar\lambda^{-1})\in\mathcal O_{\sigma K}$. Then the absolute value of $\sigma(w\bar\lambda^{-1})$ cannot be 1. But this contradicts that $\sigma(w\bar\lambda^{-1})$ is a root of
$p^\sigma(z)$ which is a characteristic polynomial of $\gamma^\sigma$ in a compact unitary group $SU(h^{\tau_j}, A\otimes_l \mathbb C)$ for some  $\tau_j:l\ra \mathbb C$.

Suppose $\tau_i:l \rightarrow \mathbb C, i=1,\cdots, d=[l:\Q]$ are embeddings of $l$ into $\mathbb C$ and $\tau_1=id$ and $\tau_2=\bar\tau_1$ is a complex conjugation, $\tau_{2k}=\bar \tau_{2k-1}$.  Then $\tau_{2k-1}|K:K\rightarrow \mathbb R$ are Galois embeddings of $K$ into $\mathbb R$. Note that $p_1(z)p_1^{\tau_2}(z)$ has $\lambda,\bar\lambda,\lambda^{-1},\bar\lambda^{-1}$ as roots. Set $p_1^*(z)=\Pi p_1^{\tau_i}(z)$. Since $p_1^*$ is invariant under $\tau_i$, $p_1^*(z)$ is over $\Z$. Since $p_1(z)$ is a factor of $p(z)$, each $p_1^{\tau_i}$ is a factor of $p^{\tau_i}(z)$ for $i>1$.
But since $h^{\tau_i}$ is unitary for $i=2k-1>1$, $p^{\tau_i}(z)$ must have roots all on the unit circle.
This shows that $p_1^*(z)$ has roots all on the unit circle except $\lambda,\lambda^{-1},\bar\lambda, \bar\lambda^{-1}$. If $s(z)$ is an irreducible minimal polynomial over $\Z$ of $\lambda$, $s(z)$ must divide $p_1^*(z)$. If $s(z)=p_1^*(z)$, then $\lambda$ is a complex Salem number and $s(z)$ is a Salem polynomial. Hence it suffices to show that $p_1^*(z)$ is irreducible over $\mathbb Z$. Suppose $p_1^*(z)=s(z)h(z)$ for some monic polynomial $h(z)$ over $\Z$. For a root $r$ of $h(z)$, it is a root of $p_1^\tau(z)$ for some $\tau=\tau_j$. Then $p_1^\tau(z)$ is a minimal polynomial of $r$ over $\tau(K)$ since the same thing is true for $p_1(z)$ and any root of it. Hence $p_1^\tau$ divides $h$ in $\tau(K)[z]$. Applying $\tau^{-1}$, since $h^{\tau^{-1}}=h$ ( $h$ is over $\Z$), we see that $p_1$ divides $h$ in $K[z]$. Hence $\lambda$ is a root of $h(z)$, but $\lambda$ is a simple root of $p_1^*(z)$. Hence $s(z)=p_1^*(z)$.
%Then it is easy to check that the characteristic polynomial of $\gamma$ is conjugate reciprocal because both
%$\lambda$ and $\bar\lambda^{-1}$ are roots. 
%Hence $\lambda$ is a complex Salem number over $K$.
\end{proof}

Note that the degree of $\lambda$ over $\mathbb Q$ is $\leq 2(n+1) [l:\Q]$.
If $\Gamma$ is non-uniform, then necessarily $K=\Q$. 

Since there are only finitely many monic polynomials of bounded degree with bounded Mahler measure \cite{A}, the set of
complex Salem numbers of bounded degree over $\mathbb Q$ has a least element.
Consequently we have
\begin{cor}If $\Gamma\subset SU(n,1)$ is a non-uniform arithmetic lattice, then the length of
a closed geodesic is uniformly bounded below for any $n\leq N$ for a fixed $N$.
\end{cor}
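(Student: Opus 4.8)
The plan is to deduce the corollary from Theorem 4.4 (largest eigenvalue of a hyperbolic isometry is a complex Salem number) together with the Northcott-type finiteness for polynomials of bounded degree and bounded Mahler measure. First I would fix $N$ and let $\Gamma \subset SU(n,1)$ be a non-uniform arithmetic lattice with $n \le N$. Since $\Gamma$ is non-uniform, the remark preceding the corollary forces $K = \Q$, so the field of definition carries no free parameter; moreover the degree of $\lambda$ over $\Q$ is at most $2(n+1)[l:\Q]$, but since $K=\Q$ we have $[l:\Q]=2$, so $\deg_\Q \lambda \le 4(n+1) \le 4(N+1)$. Thus, as $\Gamma$ ranges over all such lattices and $\gamma$ over all hyperbolic elements, the Salem number $\lambda = re^{i\theta}$ associated to $\gamma$ by Theorem 4.4 lies in a set $\mathcal{S}_N$ of complex Salem numbers whose degree over $\Q$ is uniformly bounded by $4(N+1)$.

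Next I would bound the Mahler measure. For a complex Salem number $\lambda$ with Salem polynomial $s(z) = p_1^*(z)$, all roots of $s$ except $\lambda, \bar\lambda, \lambda^{-1}, \bar\lambda^{-1}$ lie on the unit circle, so the Mahler measure $M(s) = \prod_{|z_i|>1}|z_i|$ equals $|\lambda|^2 = r^2$ (the two roots of modulus $>1$ being $\lambda$ and $\bar\lambda$, of equal modulus). So it remains only to observe that $r$ is bounded above over $\mathcal{S}_N$ — equivalently that $M(s)$ is bounded. This is where I would invoke the structure: for a hyperbolic $\gamma \in \Gamma$ one has $\gamma^m \in SU(h,\mathcal{O}) \subset GL(2n+2,\mathcal{O}_K)$ for a bounded $m$ (indeed $m$ depending only on the torsion in $GL_{2n+2}(\mathcal{O}_K)$ at worst, and this is where a little care is needed), and $r^m = |\lambda^m|$ is the top eigenvalue of an integral matrix of bounded size, but in fact the cleaner route is: we do not need an a priori upper bound on $r$ at all. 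Since by Theorem 4.4 every such $\lambda$ is a complex Salem number of degree $\le 4(N+1)$, and a complex Salem number automatically has Mahler measure $r^2 > 1$, I appeal directly to the cited fact \cite{A} that there are only finitely many monic integer polynomials of bounded degree with bounded Mahler measure — but to use this I still need boundedness of $M(s)$.

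The honest way to close the gap is therefore to argue that the set of \emph{all} complex Salem numbers of degree $\le D$ is itself finite below any threshold, which is not true in general (Salem numbers accumulate), so instead I would argue as follows: the length of the closed geodesic corresponding to $\gamma$ is $\ell_\gamma = 2\log r$, and if there were a sequence $\Gamma_j \subset SU(n_j,1)$, $n_j \le N$, with hyperbolic elements $\gamma_j$ of lengths $\ell_{\gamma_j} \to 0$, then $r_j = e^{\ell_{\gamma_j}/2} \to 1$, so $M(s_{\lambda_j}) = r_j^2 \to 1$; together with the uniform degree bound $\deg s_{\lambda_j} \le 4(N+1)$, Northcott's theorem (the statement in \cite{A}) gives only finitely many possibilities for $s_{\lambda_j}$, hence only finitely many values of $r_j$, contradicting $r_j \to 1$ unless some $r_j = 1$, which is impossible for a hyperbolic isometry. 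This yields a uniform positive lower bound on $\ell_\gamma$ depending only on $N$. The main obstacle — the step I expect to require the most care — is confirming that the relevant Mahler measure is exactly $r^2$ (so that smallness of the geodesic length really does force smallness of the Mahler measure, putting us in the finite regime of \cite{A}), and verifying that the reduction $K=\Q$ and the degree bound $\deg_\Q\lambda \le 4(N+1)$ are genuinely uniform over the family; the rest is a direct compactness/finiteness argument.
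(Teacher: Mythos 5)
Your proposal is correct and follows essentially the same route as the paper: invoke Theorem 4.4 to get that the top eigenvalue is a complex Salem number, use $K=\Q$ (hence $[l:\Q]=2$) to bound its degree by $4(N+1)$, identify the Mahler measure of the Salem polynomial with $r^2=e^{\ell_\gamma}$, and apply the finiteness of monic integer polynomials of bounded degree and bounded Mahler measure from \cite{A} to conclude that such Salem numbers cannot accumulate at $1$. The paper compresses this into the single remark that the set of complex Salem numbers of bounded degree has a least element; your contradiction argument is just that remark spelled out, and your worry about the power $m$ is immaterial since Theorem 4.4 already applies to $\gamma$ itself.
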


For real hyperbolic case, the above corollary is proved in \cite{Vincent}.

%\section{Hermitian symmetric spaces}
%Complex hyperbolic space is an example of a Hermitian symmetric space. In this section, we 
%give the growth of systole under the congruence cover for some Hermitian symmetric spaces.
%As we noticed already, the constant appearing in front of $\log(vol(M_I))$ depends on the length estimate
%of a hyperbolic isometry.

\end{document}